\newcommand{\be}{\begin{equation}}
\newcommand{\ee}{\end{equation}}
\newcommand{\ben}{\begin{equation*}}
\newcommand{\een}{\end{equation*}}
\newcommand{\ba}{\begin{aligned}}
\newcommand{\ea}{\end{aligned}}
\newtheorem{thm}{Theorem}
\newtheorem{lem}{Lemma}
\newtheorem{ex}{Example}
\newtheorem{prop}{Proposition}
\begin{document}

\makeatletter
\renewcommand{\ps@plain}{%
     \renewcommand{\@oddhead}{\hfil\textrm{Pruitt's Estimates in Banach Space}\hfil\textrm{\thepage}}%
     \renewcommand{\@evenhead}{\@oddhead}%
     \renewcommand{\@oddfoot}{}
     \renewcommand{\@evenfoot}{\@oddfoot}}
\renewcommand{\section}{\@startsection{section}{1}{0em}{\baselineskip}{.5\baselineskip}{\bfseries}}
\numberwithin{thm}{section} \numberwithin{lem}{section}
\numberwithin{cor}{section}\numberwithin{prop}{section}

\makeatother

\title[Pruitt's Estimates in Banach Space]{Pruitt's Estimates in Banach Space}
\author[Griffin]{Philip S. Griffin}
\date{\today}

\thanks{{\it Address:} Department of Mathematics, Syracuse University, Syracuse NY
13244-1150}

\thanks{{\it E-mail:} psgriffi@syr.edu}

\thanks{{\it AMS 2000 Subject Classification:} Primary 60B12, 60E15; secondary 60K05 }

\thanks{{\it Keywords:} Banach space, expected exit time, probability estimates, type, cotype  }

\begin{abstract}
Pruitt's estimates on the expectation and the distribution of the
time taken by a random walk to exit a ball of radius $r$ are
extended to the infinite dimensional setting.   It is shown that
they separate into two  pairs of estimates depending on
whether the space is type 2 or cotype 2.  It is further shown
that these estimates characterize
type 2 and cotype 2 spaces.
\end{abstract}
\maketitle

\pagestyle{plain}


\section{Introduction}

\setcounter{equation}{0}
\renewcommand{\theequation}{1.\arabic{equation}}

Let $X,X_1,X_2,\dots$ be a sequence of non-degenerate, independent
and identically distributed random variables taking values in a
separable Banach space $(B, \|\ \|)$.  Set $S_n=\sum_{j=1}^nX_j$
and let $T_r=\min\{n:\|S_n\|> r\}$ be the first time the random
walk leaves the ball of radius $r$. For $r>0$ define
\begin{align}\label{h}
G(r) = P(\|X\|>r),\ K(r) &= {r}^{-2}E(\|X\|^{2};\|X\|\leq r),\notag \\
M(r) ={r}^{-1}E(X;\|X\|\leq r),\ \ & h(r) = G(r)+K(r)+\|M(r)\|.
\end{align}

In the case that $B= \mathbb{R}^d$ with the usual Euclidean norm,
Pruitt \cite {p} obtained two fundamental estimates on the size of
$T_r$ in terms of $h(r)$.  The first gives bounds on the
distribution of $T_r$; there exist constants $0<c<C<\infty$ such
that for all $r>0$ and $n\ge 1$
\begin{equation}\label{1.2}
P(T_r>n)\le\frac{c}{nh(r)},\quad  P(T_r\le n)\le  Cnh(r).
\end{equation}

The second gives bounds on the expectation of $T_r$; there exist
constants $0<c<C<\infty$ such that for all $r>0$
\begin{equation}
\frac{c}{h(r)}\le ET_r\le \frac{C}{h(r)}.  \label{1.3}
\end{equation}

In both cases the constants are independent of the underlying
distribution and of $r$ and $n$, but they do depend on the
dimension $d$. These estimates underly one approach to recent work
on moments of the overshoot and related topics in renewal theory;
see \cite {dm1}, \cite {dm2}, \cite {dm3} and \cite {gmc3} for
examples of this work.

Apart from the problem's intrinsic interest, the motivation for
this paper came from trying to extend
to infinite dimensions some of the work cited above. To take
advantage of the techniques already
developed, infinite dimensional versions of Pruitt's estimates are
needed. We will show that when $B$ is a Hilbert space this can be achieved, but
(\ref{1.2}) and (\ref{1.3}) do not hold in a general
Banach space.
They split into two pairs of estimates depending on
whether $B$ is type 2 or cotype 2. Recall $B$ is type 2 if there
exists a constant $c\in(0,\infty)$ such that for every $n\ge 1$
and every sequence of independent random variables $X_1, \dots ,
X_n$ with $EX_i=0$ and $E\|X_i\|^2<\infty$, $1\le i\le n$, the
following inequality holds;
\begin{equation}
E\|\sum_{i=1}^n X_i\|^2 \le c \sum_{i=1}^n E\|X_i\|^2. \label{1.4}
\end{equation}
We refer to any $c$ for which \eqref{1.4} holds as a type 2 constant for $B$.
$B$ is  cotype 2 if (\ref{1.4}) holds with the inequality
reversed. For example, the $\ell^p$ spaces are  type 2 for $2\le p
< \infty$ and  cotype 2 for $1\le p \le 2$. The main results of
this paper are summarized in the following two Theorems which may be
viewed as giving alternative characterizations of type 2 and cotype 2 spaces;

\begin{thm}\label{T1}
The following are equivalent:
\begin{equation}
B\text{  is type }2;  \label{1t1}
\end{equation}
There is a constant $c$ such that for all distributions X and all
$r>0$
\begin{equation} ET_r\ge \frac{c}{h(r)}; \label{1t2}
\end{equation}
There is a constant $c$ such that for all distributions X, all
$r>0$ and all $n\ge 1$
\begin{equation}
P(T_r\le n)\le  cnh(r). \label{1t3}
\end{equation}
The constants appearing in the three equivalent statements may be taken to depend only on each other.
\end{thm}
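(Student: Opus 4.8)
The plan is to establish the cycle of implications $\eqref{1t1}\Rightarrow\eqref{1t3}\Rightarrow\eqref{1t2}\Rightarrow\eqref{1t1}$, tracking constants throughout so that the closing assertion about their mutual dependence follows automatically.

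For $\eqref{1t1}\Rightarrow\eqref{1t3}$ I would follow Pruitt's truncation scheme, replacing his dimension-dependent variance estimate by the type $2$ inequality. Set $Y_j=X_j\mathbf 1\{\|X_j\|\le r\}$ and split according to the event that some $\|X_j\|>r$ for $j\le n$, which has probability at most $nG(r)$. On the complementary event $S_k=\sum_{j\le k}Y_j$ for all $k\le n$, and I would write this as the martingale $\sum_{j\le k}(Y_j-EY_1)$ plus the drift $kEY_1$, whose norm is at most $nr\|M(r)\|$. If $n\|M(r)\|>\tfrac12$ then $nh(r)>\tfrac12$ and $\eqref{1t3}$ holds trivially; otherwise the drift never reaches $r/2$, so an exit forces the centered martingale to exceed $r/2$. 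A Doob $L^2$ maximal inequality bounds the probability of this by $4r^{-2}E\|\sum_{j\le n}(Y_j-EY_1)\|^2$, and here the type $2$ constant $c$ enters: it gives $E\|\sum_{j\le n}(Y_j-EY_1)\|^2\le c\,nE\|Y_1\|^2=c\,nr^2K(r)$. Collecting the terms yields $P(T_r\le n)\le nG(r)+4c\,nK(r)\le Cnh(r)$ with $C$ depending only on the type $2$ constant.

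The implication $\eqref{1t3}\Rightarrow\eqref{1t2}$ is elementary: from $ET_r=\sum_{k\ge0}P(T_r>k)\ge(N+1)\bigl(1-cNh(r)\bigr)$ one chooses $N$ of order $1/(ch(r))$ so that $cNh(r)\le\tfrac12$, giving $ET_r\ge c'/h(r)$; the degenerate regime of large $h(r)$ is absorbed into the trivial bound $ET_r\ge1$.

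The main obstacle is the reverse implication $\eqref{1t2}\Rightarrow\eqref{1t1}$, which I would prove by contraposition. Using the standard equivalence of type $2$ with the existence of a constant $C$ for which $E\|S_n\|^2\le CnE\|X\|^2$ holds for all i.i.d. mean-zero $X$ and all $n$, the failure of type $2$ produces, for each $A$, an i.i.d. mean-zero $X$ (which may be taken with bounded support, so $\|X\|\le r$ after a harmless rescaling) and an $n$ with $E\|S_n\|^2\ge A\,nE\|X\|^2$. I would choose $r$ with $r^2$ comparable to $E\|S_n\|^2$; since $r\gg(E\|X\|^2)^{1/2}$, the tails force $G(r)$ and $\|M(r)\|$ to be negligible against $K(r)\approx E\|X\|^2/r^2$, so $h(r)\approx E\|X\|^2/r^2$ and hence $1/h(r)\gtrsim An$. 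On the other hand a Paley--Zygmund argument, with the fourth moment controlled through a Hoffmann--J\o rgensen inequality using the boundedness of $\|X\|$, shows $P(\|S_n\|>2r)\ge\delta$ for an absolute $\delta>0$; since $\|S_{kn}\|\le r$ on $\{T_r>kn\}$, a block/restart comparison then gives $P(T_r>kn)\le(1-\delta)^k$ and therefore $ET_r\le C'n$. Comparing the two estimates, $\eqref{1t2}$ would demand $C'n\ge ET_r\ge c\,An$, which fails once $A$ is large, the desired contradiction. The delicate points are the passage to the i.i.d. characterization of type $2$ and securing the Paley--Zygmund lower bound with a constant independent of the example, for which the uniform control of the truncated fourth moment is essential.
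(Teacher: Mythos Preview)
Your implications $\eqref{1t1}\Rightarrow\eqref{1t3}$ and $\eqref{1t3}\Rightarrow\eqref{1t2}$ match the paper's argument essentially line for line (truncation, Doob's $L^2$ maximal inequality with the type~2 bound replacing Pruitt's variance estimate; then the elementary summation in Lemma~\ref{L2}). The only cosmetic slip is that type~2 gives $E\|\sum(Y_j-EY_1)\|^2\le cnE\|Y_1-EY_1\|^2$, not $cnE\|Y_1\|^2$ directly; the paper absorbs the extra factor via $E\|Y_1-EY_1\|^2\le 4E\|Y_1\|^2$.

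For $\eqref{1t2}\Rightarrow\eqref{1t1}$ your route is genuinely different from the paper's, and the difference matters. The paper does \emph{not} argue by contraposition or use Paley--Zygmund. Instead it first reduces to the asymptotic statement $\liminf_{r\to\infty} r^{-2}ET_r\ge c/E\|X\|^2$ for $X\in L^2_0$, passes via Lemma~\ref{L1} to the tail form $r^2P(T_r\le n)\le c'nE\|X\|^2$, and then integrates $P(\|S_n\|>r)\le P(T_r\le n)$ in $r$ to bound $E\|S_n\|$. The second moment is recovered from the first by de~Acosta's universal inequality $\mathrm{Var}(\|S_n\|)\le 4nE\|X\|^2$, yielding $\limsup_n n^{-1}E\|S_n\|^2\le CE\|X\|^2$. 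Finally, Proposition~\ref{Astype} upgrades this asymptotic i.i.d.\ inequality to type~2. The point is that de~Acosta's variance bound replaces any higher-moment machinery, and all constants are explicit.

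Your contrapositive scheme is plausible but has a real gap precisely where you flag it: securing a Paley--Zygmund constant $\delta$ \emph{independent of the counterexample}. Hoffmann--J{\o}rgensen gives $E\|S_n\|^4\lesssim (E\|S_n\|^2)^2+E\max_i\|X_i\|^4$, so you need $R^4\lesssim (E\|S_n\|^2)^2$, where $R=\mathrm{ess\,sup}\|X\|$. The bare failure of type~2 only hands you some $X$ and some $n$ with $E\|S_n\|^2>AnE\|X\|^2$; nothing forces $E\|S_n\|^2\ge R^2$. To arrange this you must argue that $n$ can be taken arbitrarily large with the ratio still exceeding (say) $A/2$, which in effect requires the Gaussian/asymptotic characterisation of type~2 --- i.e.\ exactly the content of Proposition~\ref{Astype}. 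So your approach can be completed, but it ends up invoking the same key lemma the paper uses, together with extra Hoffmann--J{\o}rgensen input that the paper's de~Acosta route avoids entirely.
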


\begin{thm}\label{cT1}
The following are equivalent:
\begin{equation}
B\text{  is cotype }2;
\end{equation}
There is a constant $c$ such that for all distributions $X$ and
all $r>0$
\begin{equation} ET_r\le \frac{c}{h(r)};
\end{equation}
There is a constant $c$ such that for all distributions $X$, all
$r>0$ and all $n\ge 1$
\begin{equation}
P(T_r > n)\le  \frac{c}{nh(r)}.
\end{equation}
The constants appearing in the three equivalent statements may be taken to depend only on each other.

\end{thm}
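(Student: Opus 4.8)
The plan is to run the cycle cotype $2\Rightarrow ET_r\le c/h(r)\Rightarrow P(T_r>n)\le c/(nh(r))\Rightarrow$ cotype $2$, carrying the constants through so that each depends only on its predecessor. The middle implication is immediate from Markov's inequality, $P(T_r>n)\le ET_r/n\le c/(nh(r))$. This cannot be reversed by summation, since $\sum_n 1/n$ diverges; accordingly the loop is closed not by recovering the expectation bound from the tail bound, but by deducing cotype $2$ from it directly.

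For cotype $2\Rightarrow ET_r\le c/h(r)$, which is the analytic core, I would reduce to a one-block estimate: for every radius $\rho$ there are a constant $\theta<1$ and $N=\lfloor c_1/h(\rho)\rfloor$ with $P(T_\rho>N)\le\theta$. Granting this, a restart argument gives geometric decay of $P(T_r>kN)$. Indeed, on $\{T_r>kN\}$ one has $\|S_{(k-1)N}\|\le r$, so remaining in the ball of radius $r$ over the next block forces the fresh increment walk to stay within $2r$ of its start; by independence $P(T_r>kN)\le\theta\,P(T_r>(k-1)N)$ provided $P(T_{2r}>N)\le\theta$. Summing, $ET_r\le N/(1-\theta)$, and the comparability $h(2r)\ge c_2 h(r)$ (a regularity property of $h$) turns $N$ of order $1/h(2r)$ into the desired bound of order $1/h(r)$.

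To prove the block estimate at radius $\rho$, note that on $\{T_\rho>N\}$ no increment can exceed $2\rho$ (a larger jump from inside the ball would exit), so the path coincides there with its truncation $\bar S_k=\sum_{i\le k}X_i\mathbf{1}_{\|X_i\|\le 2\rho}$ and $\|\bar S_N\|\le\rho$; it thus suffices to bound $P(\|\bar S_N\|\le\rho)$ away from $1$. With $Y=X\mathbf{1}_{\|X\|\le 2\rho}$, $\mu=EY$ and $\sigma^2=E\|Y-\mu\|^2$, I would lower bound $E\|\bar S_N\|^2$ two ways: a norming functional $f$ with $f(\mu)=\|\mu\|$ gives $E\|\bar S_N\|^2\ge E[f(\bar S_N)^2]\ge(Ef(\bar S_N))^2=(N\|\mu\|)^2$, absorbing the drift term of $h$, while the reversed form of \eqref{1.4} gives $E\|\bar S_N-N\mu\|^2\ge c_0 N\sigma^2$ with $\sigma^2\ge c_3\rho^2(K(\rho)+G(\rho))$ once the jumps in $(\rho,2\rho]$ are accounted for, absorbing the remaining terms. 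For $N$ of order $1/h(\rho)$ these yield $E\|\bar S_N\|^2\ge\kappa\rho^2$. The main obstacle is the ensuing anticoncentration: passing from $E\|\bar S_N\|^2\ge\kappa\rho^2$ to $P(\|\bar S_N\|>\rho)\ge\theta'>0$. I would secure this either through a companion fourth-moment (Paley--Zygmund) bound or by symmetrizing and applying L\'evy's inequality, the delicate part being the bookkeeping that reconciles the truncation level $2\rho$ with the definition of $h$ at $\rho$.

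Finally, for the tail bound $\Rightarrow$ cotype $2$, I would test the hypothesis on suitable laws. By the standard reductions it suffices to verify the reversed \eqref{1.4} for i.i.d. symmetric $X$, for which $M(r)\equiv 0$ and hence $h(r)=G(r)+K(r)$. Given such $X$ with $v=E\|X\|^2$, choose $r$ at the scale $\sqrt{mv}$, with the fixed constant arranged so that $mh(r)\ge 2c$; the assumed bound then forces $P(T_r>m)\le 1/2$, i.e. $P(\max_{k\le m}\|S_k\|>r)\ge 1/2$. L\'evy's inequality upgrades this to $P(\|S_m\|>r)\ge 1/4$, so that $E\|S_m\|^2\ge r^2/4\ge c'mv=c'\sum_{i\le m}E\|X_i\|^2$, which is cotype $2$ with a constant depending only on $c$. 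This closes the cycle.
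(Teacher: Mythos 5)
Your scaffolding is sound: the restart/doubling argument is exactly the mechanism of the paper's Lemmas \ref{L1} and \ref{L3}, the Markov step is identical, and your closing implication (tail bound $\Rightarrow$ cotype $2$) parallels the paper's argument, with L\'evy's inequality for symmetric sums playing the role that Doob's inequality plays there. One caveat on that last step: the passage from ``$\liminf_n E\|S_n\|^2/n \ge c' E\|X\|^2$ for all symmetric i.i.d.\ $X\in L^2$'' to cotype $2$, which you wave off as ``standard reductions,'' is itself a nontrivial ingredient --- the paper devotes Proposition \ref{Astype} to it, via a CLT together with a convergence-of-moments theorem of de Acosta and Gin\'e.

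The genuine gap is the one you flag yourself: the anticoncentration step inside the core implication, cotype $2\Rightarrow ET_r\le c/h(r)$, and neither of your proposed rescues works as stated. L\'evy's inequality transfers tail bounds from the running maximum to the endpoint; it gives no lower bound on $P(\|\bar S_N\|>\rho)$, so ``symmetrize and apply L\'evy'' is circular here. Paley--Zygmund requires a fourth-moment comparison $E\|\bar S_N\|^4\le C\big(E\|\bar S_N\|^2\big)^2$, which is not free: you would need a Hoffmann--J{\o}rgensen-type inequality for the centered truncated sums, together with a separate treatment of the drift-dominated case. Note that no argument based only on the second-moment bookkeeping you set up can close the variance-dominated case: the cotype lower bound $E\|\bar S_N-N\mu\|^2\ge c_0N\sigma^2$ has $c_0\le 1$, while de Acosta's variance bound \eqref{kol} gives $\mathrm{Var}(\|\bar S_N-N\mu\|)\le 4N\sigma^2$, so $\big(E\|\bar S_N-N\mu\|\big)^2\ge (c_0-4)N\sigma^2$ is vacuous and Chebyshev around the mean of the norm cannot produce the required probability bound. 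There is also a truncation error: your $\bar S_N$ kills jumps exceeding $2\rho$ outright, so $\sigma^2$ never sees the mass $G(2\rho)$, which is part of $h(\rho)$; you would need a third case (handled by $(1-G(2\rho))^N$ when $NG(2\rho)$ is large), whereas the paper's radial truncation \eqref{hat} preserves this mass in $E\|\hat X\|^2$, see \eqref{QX}. The paper avoids anticoncentration altogether: by Klass's lower bound for randomly stopped sums,
\begin{equation*}
64r^2 \ \ge\ E\max_{1\le k\le T_r}\big(\|\hat S_k-E\hat S_k\|\big)^2 \ \ge\ \alpha\, E\phi(T_r)\ \ge\ \alpha c\, ET_r\, E\|\hat X-E\hat X\|^2,
\end{equation*}
which converts the cotype second-moment inequality directly into an upper bound on $ET_r$, with the drift-dominated case then handled by Chebyshev via \eqref{kol}. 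Your plan is likely completable along the Hoffmann--J{\o}rgensen/Paley--Zygmund route, but as written the central estimate is missing, so the proof is incomplete.
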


As a consequence it follows that Pruitt's estimates hold precisely when
$B$ is both type 2 and cotype 2, that is by Kwapien's theorem \cite{kw}, when $B$ is isomorphic to a
Hilbert space.  We observe in passing that since $\Bbb R^d$ with the usual norm can be isometrically
embedded in the Hilbert space
$\ell^2$, this implies the constants
in (\ref{1.2}) and (\ref{1.3}) may in fact be taken independent of
dimension.

The method of proof of these results, while similar in places to
Pruitt's original proofs, necessarily differs in important ways
and provides some new insights to the estimates.  For example it
is shown that it suffices to establish the estimates
asymptotically and just for  distributions with mean zero and
finite second moment; see the statements of Theorems \ref{T1a} and
\ref{cT1a}.

It would be interesting to know if there is a different function
which plays the role of $h$ in a general Banach space.  That is,
is there a function $g$ for which (\ref{1.2}) and (\ref{1.3}) hold
with $g$ replacing $h$?  In this direction it is interesting to
note that by Lemma \ref{L1}
below,  if it is the case that $ET_{2r}$ and $ET_{r}$ are comparable,
then such a function exists, namely $g(r) = (ET_r)^{-1}$. The problem
then of course becomes how to calculate $g$ from the underlying
distribution in a manner similar to $h$. This is discussed further in Section 4.


\section{Preliminaries and
General Results}

\setcounter{equation}{0}
\renewcommand{\theequation}{2.\arabic{equation}}

The interplay between the functions $G(r), K(r),$ and $M(r)$ plays
an important role in our analysis. Each of these functions is
right continuous with left limits and approaches $0$ as $r\to
\infty$.  Their behavior near $r=0$ will not be of much
importance, but we point out that $G(r)\to P(\|X\|>0)>0$ as $r\to
0$, hence $h(r)$ is bounded away from $0$ as $r\to 0$.  Further
$h$ is strictly positive for all $r>0$ since an integration by
parts shows that
\begin{equation}
G(r)+K(r)=r^{-2}\int_0^r 2uG(u)\ du. \label{Q2}
\end{equation}
As $\|M(r)\|\le 1$, this also shows that $h(r)\le 2$   for all
$r$.  In dealing with $h$, it is useful to think of $h$ as
decreasing and $r^2h(r)$ as increasing. While this is not quite
true the following inequalities provide suitable substitutes (see
(2.3) of \cite{p}); for any $0<r\le s<\infty$
\begin{equation}
\frac {r^2}{2s^2}\le \frac{h(s)}{h(r)}\le 2. \label{hd}
\end{equation}

We denote by $L$ the collection of random variables taking values
in $B$. The following simple result shows why the estimates in
(\ref{1.2}) take the form they do, and how they relate to
(\ref{1.3}).

\begin{lem}\label{L1}
For any $X\in L$, any $r>0$ and any $n\ge 1$
\begin{equation}
P(T_{2r}\le n)\le \frac{n}{ET_r}, \label{L1a}
\end{equation}
\begin{equation}
P(T_r > n)\le \frac{ET_r}{n}. \label{L1b}
\end{equation}
\end{lem}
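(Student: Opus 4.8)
The plan is to dispatch \eqref{L1b} immediately and to devote the real work to \eqref{L1a}. For \eqref{L1b}, since $T_r$ is a nonnegative (integer-valued) random variable, Markov's inequality gives $P(T_r \ge n) \le ET_r/n$, and as $\{T_r > n\} \subseteq \{T_r \ge n\}$ the bound $P(T_r > n) \le ET_r/n$ follows at once. So the substance is \eqref{L1a}, which I would recast as the equivalent assertion $ET_r \le n/p$, where $p := P(T_{2r} \le n)$; rearranging this yields \eqref{L1a}. The degenerate case $p = 0$ makes \eqref{L1a} trivial, while the argument below will in fact show $ET_r < \infty$ whenever $p > 0$.

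To bound $ET_r$ from above I would show that the tail of $T_r$ decays geometrically on the scale $n$, namely $P(T_r > kn) \le (1-p)^k$ for every $k \ge 0$. The engine is a one-step estimate obtained by restarting the walk at the fixed time $jn$. Suppose $T_r > jn$; then $\|S_{jn}\| \le r$, so the walk sits inside the $r$-ball at time $jn$. Consider the recentred walk $i \mapsto S_{jn+i} - S_{jn}$, which by independence of the increments is an independent copy of $(S_i)$ and whose exit time from the $2r$-ball is therefore distributed as $T_{2r}$. On the event that this recentred walk leaves the $2r$-ball within $n$ steps---an event of probability $p$ that is independent of $\mathcal{F}_{jn}$---there is an $i \le n$ with $\|S_{jn+i} - S_{jn}\| > 2r$, whence by the reverse triangle inequality $\|S_{jn+i}\| \ge \|S_{jn+i} - S_{jn}\| - \|S_{jn}\| > 2r - r = r$, i.e. $T_r \le (j+1)n$. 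Consequently $\{T_r > (j+1)n\}$ is contained in the complement of this event, giving $P\bigl(T_r > (j+1)n \mid \mathcal{F}_{jn}\bigr) \le 1 - p$ on $\{T_r > jn\}$, and hence $P(T_r > (j+1)n) \le (1-p)\,P(T_r > jn)$. Iterating from $P(T_r > 0) = 1$ yields the claimed geometric bound.

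Finally I would assemble the pieces. Writing the mean as a tail sum and grouping the terms into blocks of length $n$ gives $ET_r = \sum_{j \ge 0} P(T_r > j) \le n \sum_{k \ge 0} P(T_r > kn)$, using monotonicity of the tail. Inserting $P(T_r > kn) \le (1-p)^k$ and summing the geometric series produces $ET_r \le n/p$, which is exactly \eqref{L1a}. I expect the one-step estimate to be the only real obstacle: the crux is the geometric observation that the recentred walk leaving the $2r$-ball forces the original walk to leave the origin's $r$-ball (the reverse triangle inequality step), together with a careful use of the independence of the post-$jn$ increments from $\mathcal{F}_{jn}$ so that the success probability is genuinely $p$ in each block. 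Everything else---Markov's inequality for \eqref{L1b}, the tail-sum rearrangement, and the geometric summation---is routine.
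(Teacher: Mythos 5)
Your proposal is correct and follows essentially the same route as the paper: Markov's inequality for \eqref{L1b}, the geometric tail bound $P(T_r>kn)\le P(T_{2r}>n)^k$ (your restart/reverse-triangle-inequality argument is exactly what the paper compresses into ``by the Markov property''), and then the block tail-sum estimate $ET_r/n\le\sum_{k\ge 0}P(T_r>kn)$ summed as a geometric series. The only difference is that you spell out the one-step restart estimate in full detail, which the paper leaves implicit.
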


\begin{proof} The second inequality is an immediate consequence of Markov's inequality.
For the first we just observe that by the Markov property, for any
$k\ge 0$
$$ P(T_r>kn) \le P(T_{2r}>n)^k.$$
Hence
$$\frac{ET_r}n \le \sum_{k=0}^\infty P(T_r>kn) \le \frac 1{1-P(T_{2r}>n)}$$
from which (\ref{L1a}) immediately follows.
\end{proof}

As mentioned in the introduction, this result raises the
possibility of finding a function which plays the role of $h$ in
the general setting.  It also allows us to show how
(\ref{1.2}) and (\ref{1.3}) split into two separate pairs of estimates.

\begin{lem}\label{L2}
The following are equivalent: There exists a
constant $c$ such that for any $X\in L$ and all $r>0$
\begin{equation}
ET_r\ge \frac{c}{h(r)}; \label{L2b}
\end{equation}
There exists a constant $C$ such that for  all $X\in L$, $r>0$ and $n\ge 1$
\begin{equation}
P(T_r\le n)\le {C}{nh(r)}. \label{L2a}
\end{equation}
The constants appearing in these statements may be taken to depend only on each other.
\end{lem}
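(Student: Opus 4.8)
The plan is to prove the two implications separately, each time reading off the new constant as an explicit function of the old one, so that the closing clause about mutual dependence comes for free.

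To show that \eqref{L2b} implies \eqref{L2a}, I would start from Lemma~\ref{L1}. Inequality \eqref{L1a} gives $P(T_{2r}\le n)\le n/ET_r$, and inserting the hypothesis $ET_r\ge c/h(r)$ turns this into $P(T_{2r}\le n)\le nh(r)/c$. The only mismatch is that the radius on the left is $2r$ while $h$ on the right is evaluated at $r$; this is precisely what the doubling estimate \eqref{hd} is for. Taking $s=2r$ there yields $h(r)\le 8\,h(2r)$, whence $P(T_{2r}\le n)\le 8nh(2r)/c$. Since $r\mapsto 2r$ is a bijection of $(0,\infty)$, relabelling $2r$ as $r$ produces \eqref{L2a} with $C=8/c$.

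For the converse I would use the tail-sum representation $ET_r=\sum_{n\ge 0}P(T_r>n)$ together with $P(T_r>n)=1-P(T_r\le n)\ge 1-Cnh(r)$. Choosing $N=\lfloor (2Ch(r))^{-1}\rfloor$ forces $Cnh(r)\le 1/2$, hence $P(T_r>n)\ge 1/2$, for every $n\le N$, so that $ET_r\ge (N+1)/2$. When $h(r)\le (2C)^{-1}$ one has $N\ge 1$ and $N\ge (4Ch(r))^{-1}$, giving $ET_r\ge (8Ch(r))^{-1}$. The remaining regime $h(r)>(2C)^{-1}$ is handled by the trivial bound $ET_r\ge 1$, valid since $T_r\ge 1$ always: there $(8Ch(r))^{-1}<1/4\le ET_r$, so the inequality holds in this range too. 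Thus \eqref{L2b} holds with $c=1/(8C)$.

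I do not expect a serious obstacle, as the lemma is essentially bookkeeping. The two points that require care are the change of radius in the forward direction, where \eqref{hd} must be invoked precisely to keep the resulting constant a function of $c$ alone, and the large-$h$ regime in the converse, where the tail sum is too short to be informative and one must instead fall back on the crude bound $ET_r\ge 1$.
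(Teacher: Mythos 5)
Your proof is correct and follows essentially the same route as the paper: the forward direction is the paper's argument verbatim (Lemma \ref{L1} plus the doubling inequality \eqref{hd}, yielding $C=8/c$), and your converse rests on the same key observation that $P(T_r\le n)\le 1/2$ for $n$ up to roughly $(2Ch(r))^{-1}$. The paper packages that observation more compactly via $ET_r\ge E(T_r;T_r>t)\ge tP(T_r>t)$ with $t=(2Ch(r))^{-1}$, obtaining $c=(4C)^{-1}$ without case analysis, whereas your tail-sum version needs the separate large-$h(r)$ case and gives $c=(8C)^{-1}$; both constants depend only on $C$, as required.
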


\begin{proof} First assume (\ref{L2b}), then by (\ref{L1a}), for all $X\in L$, $r>0$ and $n\ge 1$
\begin{align*}
P(T_r\le n) &\le \frac n{ET_{r/2}} \\
&\le \frac{nh(r/2)}c \\
&\le \frac {8nh(r)}c
\end{align*}
by (\ref{hd}).  Thus we may take $C=8c^{-1}$.

Now assume (\ref{L2a}), then for any $X\in L$ and all $r>0$
\begin{align*}
ET_r &\ge E(T_r;T_r>\frac 1{2Ch(r)}) \\
&\ge \frac 1{2Ch(r)}P(T_r>\frac 1{2Ch(r)}) \\
&\ge \frac 1{4Ch(r)}.
\end{align*}
Thus we may take $c=(4C)^{-1}$.
\end{proof}

\begin{lem}\label{L3}
The following are equivalent: There exists a
constant $c$ such that for any $X\in L$ and all $r>0$
\begin{equation}
ET_r\le \frac{c}{h(r)}; \label{L3b}
\end{equation} There exists a constant $C$ such that for all $X\in L$, $r>0$ and $n\ge 1$
\begin{equation}
P(T_r> n)\le \frac{C}{nh(r)}. \label{L3a}
\end{equation}
The constants appearing in these statements may be taken to depend only on each other.
\end{lem}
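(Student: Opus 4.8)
The plan is to exploit the evident parallel with Lemma \ref{L2}: one implication will follow immediately from Lemma \ref{L1}, while the reverse requires summing the tail bound and is where the real work lies. So I would treat the two directions separately and arrange the constants at the end.

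For the implication \eqref{L3b}$\,\Rightarrow\,$\eqref{L3a} I would simply invoke Markov's inequality in the form \eqref{L1b}: for any $X\in L$, $r>0$ and $n\ge 1$,
\[
P(T_r>n)\le \frac{ET_r}{n}\le \frac{c}{nh(r)},
\]
so that \eqref{L3a} holds with $C=c$. This is the easy half.

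The substantive direction is \eqref{L3a}$\,\Rightarrow\,$\eqref{L3b}, where I want to bound $ET_r=\sum_{n\ge 0}P(T_r>n)$. Inserting \eqref{L3a} term by term fails, since $\sum_n (nh(r))^{-1}$ diverges; extracting genuine summability from a merely $O(1/n)$ tail is the main obstacle. To get around it I would recover the submultiplicativity already used in the proof of Lemma \ref{L1}, namely $P(T_r>kn)\le P(T_{2r}>n)^k$ for all $k\ge 0$, which comes from the Markov property together with the observation that consecutive block increments of a walk confined to the ball of radius $r$ are confined to the ball of radius $2r$. Applying \eqref{L3a} at radius $2r$ and using the left-hand inequality in \eqref{hd} with $s=2r$ to replace $h(2r)$ by $h(r)/8$, I would then choose a block length $n_0$ of order $1/h(r)$, explicitly $n_0=\lceil 16C/h(r)\rceil$, so that $P(T_{2r}>n_0)\le 1/2$ and hence $P(T_r>kn_0)\le 2^{-k}$.

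With this geometric decay in place the remaining estimate is routine. Grouping the terms of $\sum_{n\ge 0}P(T_r>n)$ into blocks of length $n_0$ and using that $n\mapsto P(T_r>n)$ is nonincreasing gives
\[
ET_r\le n_0\sum_{k\ge 0}P(T_r>kn_0)\le n_0\sum_{k\ge 0}2^{-k}=2n_0.
\]
Since $h(r)\le 2$ forces $1\le 2/h(r)$, one has $n_0\le (16C+2)/h(r)$, whence $ET_r\le c/h(r)$ with $c$ depending only on $C$, establishing \eqref{L3b}. Both implications then hold with constants depending only on each other, as required.
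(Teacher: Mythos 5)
Your proof is correct, and the substantive direction \eqref{L3a} $\Rightarrow$ \eqref{L3b} takes a genuinely different route from the paper's. The paper applies the Markov property exactly once, splitting the time interval in half: for $n\ge 3$,
\begin{equation*}
P(T_r>n)\le P(T_r>\lfloor n/2\rfloor)\,P(T_{2r}>\lfloor n/2\rfloor)\le \frac{9C^2}{n^2h(r)h(2r)}\le\frac{72C^2}{n^2h^2(r)},
\end{equation*}
which upgrades the unsummable $O(1/n)$ tail to a summable $O(1/n^2)$ tail; it then bounds $ET_r=\sum_{n\ge 0}P(T_r>n)$ directly, splitting the sum at $n\approx 2/h(r)$ and arriving at $c=6(1+6C^2)$. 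You instead iterate the blocking to full submultiplicativity, $P(T_r>kn_0)\le P(T_{2r}>n_0)^k$, and calibrate the block length $n_0=\lceil 16C/h(r)\rceil$ so that each factor is at most $1/2$, giving geometric decay along multiples of $n_0$ and the clean bound $ET_r\le 2n_0\le (32C+4)/h(r)$; your constant arithmetic checks out, including the use of $h(r)\le 2$ to absorb the ceiling. Both arguments rest on the same two pillars --- the radius-doubling Markov-property trick already used for Lemma \ref{L1} and the comparison $h(2r)\ge h(r)/8$ from \eqref{hd} --- so they are close in spirit, but the decompositions differ. Your version yields the stronger intermediate fact that the tail of $T_r$ decays exponentially beyond scale $1/h(r)$, and a final constant that is linear rather than quadratic in $C$; the paper's version avoids having to choose a block length, needing only a single application of the Markov property followed by direct summation.
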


\begin{proof} First assume (\ref{L3b}), then (\ref{L3a}) follows immediately from Markov's inequality with $C=c$.

Now assume (\ref{L3a}), then by the Markov property, for any $X\in L$, $r>0$ and $n\ge 3$
\begin{align*}
P(T_r> n) &\le P(T_r> \lfloor n/2\rfloor)P(T_{2r}> \lfloor n/2\rfloor ) \\
&\le \frac {9C^2}{n^2h(r)h(2r)}\\
&\le \frac {72C^2}{n^2h^2(r)}
\end{align*}
by (\ref{hd}).  Hence recalling that $h(r)\le 2$ for all $r$ we
have
\begin{align*}
ET_r &= \sum_{n=0}^\infty P(T_r> n)\\
&\le \sum_{n\le \frac 2{h(r)}+1}1+\sum_{n> \frac 2{h(r)}+1}\frac {72C^2}{n^2h^2(r)}\\
&\le \frac 2{h(r)}+2 +\frac {36C^2}{h(r)} \\
&\le \frac {6}{h(r)}+\frac {36C^2}{h(r)}= \frac c{h(r)}
\end{align*}
where $c=6(1+6C^2)$.
\end{proof}

Recall that $L$ denotes the class of all random variables taking
values in $B$.  We will also have need to consider the following two  subclasses;
\begin{align*}
L^2 &= \{X\in L: E\|X\|^2<\infty\}, \\
L^2_0 &= \{X\in L^2: EX=0\}.
\end{align*}

The definitions of type 2 and  cotype 2 involve sums of
independent random variables in $L^2_0$.  An apparently weaker condition
frequently encountered in the literature is;
$B$ is Gaussian type 2 if there exists a constant $c$
such that for every finite sequence $\{x_j\}_{j=1}^N$ in $B$
\begin{equation}\label{gt2}
E\|\sum_{j=1}^Ng_jx_j\|^2 \le c  \sum_{j=1}^N\|x_j\|^2
\end{equation}
where $g_1,\dots ,g_N$ are real valued IID random variables with standard normal distribution.
$B$ is Gaussian cotype 2 if \eqref{gt2} holds with the inequality reversed.

It is well known that these notions of type 2 (respectively cotype 2) are equivalent and furthermore
the constants in each formulation may be chosen to depend only on each other; see for example pp 53 and 130 of
\cite{ms} combined with Proposition 9.11 of \cite{lt}.

It will be convenient to introduce one more equivalent formulation dealing with sums of IID random
variables in $L^2_0$.  By Propositions 9.19 and 9.20 of \cite{lt}, $B$ is type 2 if
there exists a constant $c$ such that for every $n\ge 1$ and every
sequence of IID random variables $X, X_1, \dots , X_n$ with $X\in
L^2_0$, the following inequality holds;
\begin{equation}
E\|S_n\|^2 \le c n E\|X\|^2. \label{1.4IID}
\end{equation}
$B$ is cotype 2 if the inequality is reversed. We will need the following generalization;

\begin{prop}\label{Astype}
B is type 2 if (and only if) there is a constant $c$ such that for
all  $X\in L^2_0$ \begin{equation}
\limsup_{n\to\infty}\frac{E\|S_n\|^2}{n} \le c  E\|X\|^2.
\label{ATIID1}
\end{equation}
The type 2 constant may be chosen to depend only on the constant $c$ in \eqref{ATIID1}.

B is cotype 2 if (and only if) there is a constant $c$
such that for all  $X\in L^2_0$ \begin{equation}
\liminf_{n\to\infty}\frac{E\|S_n\|^2}{n} \ge c  E\|X\|^2.
\label{ATIID2}
\end{equation}
The cotype 2 constant may be chosen to depend only on the constant $c$ in \eqref{ATIID2}.
\end{prop}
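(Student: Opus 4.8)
The plan is to dispatch the two necessity (``only if'') directions in one line each and to reduce each sufficiency (``if'') direction to the corresponding Gaussian formulation \eqref{gt2}, whose equivalence with type 2 (respectively cotype 2) \emph{with comparable constants} has already been quoted from \cite{ms} and \cite{lt}. For necessity, if $B$ is type 2 then \eqref{1.4IID} gives $E\|S_n\|^2\le cnE\|X\|^2$ for every $n$, so dividing by $n$ and letting $n\to\infty$ yields \eqref{ATIID1} with the same constant; the cotype case is identical using the reversed inequality, so the type 2 (respectively cotype 2) constant is simply transported. Thus the real content lies in the two sufficiency directions, which I would treat in parallel.

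The key step is to restrict the hypothesis to finite-dimensional $X\in L^2_0$, for which the limit $\lim_n E\|S_n\|^2/n$ actually exists and equals $E\|G_X\|^2$, where $G_X$ is the centered Gaussian supported on the span of the range of $X$ having the same covariance as $X$. Indeed, on the finite-dimensional subspace carrying $X$ the classical central limit theorem gives $S_n/\sqrt n\Rightarrow G_X$, and since the norm $\|\cdot\|$ is there equivalent to a Euclidean norm $|\cdot|$ for which $E|S_n/\sqrt n|^2=E|X|^2$ is constant in $n$, the family $\{\|S_n/\sqrt n\|^2\}$ is uniformly integrable; hence $E\|S_n\|^2/n\to E\|G_X\|^2$. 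Feeding this into \eqref{ATIID1} (respectively \eqref{ATIID2}) produces $E\|G_X\|^2\le cE\|X\|^2$ (respectively $\ge$) for every finite-dimensional $X\in L^2_0$.

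To recover \eqref{gt2} I would, given a finite family $\{x_j\}_{j=1}^N$ in $B$, take $X=\sqrt N\,\eta\,x_J$ with $J$ uniform on $\{1,\dots,N\}$ and $\eta$ an independent Rademacher sign. This $X$ lies in $L^2_0$, is finite-dimensional, and satisfies $E\|X\|^2=\sum_{j=1}^N\|x_j\|^2$; moreover, for every $\phi\in B^*$ one computes $E\langle\phi,X\rangle^2=\sum_{j=1}^N\langle\phi,x_j\rangle^2$, which is exactly the variance of $\langle\phi,\sum_j g_jx_j\rangle$, so the two centered Gaussians agree in law and $E\|G_X\|^2=E\|\sum_{j=1}^N g_jx_j\|^2$. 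The inequality $E\|G_X\|^2\le cE\|X\|^2$ then reads precisely as the Gaussian type 2 inequality \eqref{gt2} with constant $c$, and the reversed inequality reads as Gaussian cotype 2 with constant $c$. Invoking the quoted equivalences finishes the argument with a type 2 (respectively cotype 2) constant depending only on $c$.

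The step I expect to require the most care is the second-moment convergence in the finite-dimensional central limit theorem, that is, the uniform integrability of $\|S_n/\sqrt n\|^2$: we assume only $E\|X\|^2<\infty$ and no higher moments, so this cannot come from $L^{1+\varepsilon}$ boundedness. I would instead obtain it from the elementary fact that nonnegative random variables converging in distribution with convergent means are uniformly integrable, applied to $|S_n/\sqrt n|^2$ coordinatewise, together with the norm equivalence on the finite-dimensional subspace. Everything else---the covariance identity, the matching of the two Gaussians, and the tracking of the constant through the cited equivalences---is routine.
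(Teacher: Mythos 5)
Your proof is correct, and its skeleton coincides with the paper's: the same randomized variable ($X=\sqrt N\,\eta\,x_J$; the paper writes it as $\varepsilon\sqrt N\sum_{j}I_{A_j}x_j$ with disjoint sets $A_j$ of probability $1/N$), the same computation $E\|X\|^2=\sum_j\|x_j\|^2$, the same passage through the central limit theorem to the Gaussian sum $\sum_j g_jx_j$, and the same final appeal to the quoted equivalence between Gaussian type/cotype 2 and type/cotype 2 with comparable constants. The one genuine difference is how the crucial moment-convergence step $E\|S_n\|^2/n\to E\|\sum_j g_jx_j\|^2$ is justified: the paper cites Theorem 5.1 of de Acosta--Gin\'e \cite{dag}, which applies because the constructed $\|X_1\|$ is bounded, whereas you prove it by hand via uniform integrability --- exploiting that on the finite-dimensional span the norm is equivalent to a Euclidean norm $|\cdot|$ under which $E|S_n/\sqrt n|^2$ is exactly constant in $n$, so the Scheff\'e-type fact (nonnegative variables converging in law with converging means are uniformly integrable, via Skorokhod representation) gives uniform integrability of $|S_n/\sqrt n|^2$, hence of $\|S_n/\sqrt n\|^2$, hence convergence of second moments. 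That fact is true and your application of it is sound (the means here are literally constant in $n$, and equal to $E|G_X|^2$ since the trace of the covariance is preserved). What your route buys is self-containedness and slightly more generality: the limit $E\|S_n\|^2/n\to E\|G_X\|^2$ is established for every finite-dimensional $X\in L^2_0$, not only bounded ones; what the paper's route buys is brevity, since the constructed variable is bounded and the cited theorem applies immediately. Your identification of the limit Gaussian by matching characteristic functionals, versus the paper's direct reading from the coefficient representation $S_n/\sqrt n=\sum_j Z_{n,j}x_j$, is an inessential stylistic difference; both handle the constants correctly.
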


\begin{proof}
Let $x_j\in B$ for $1\le j\le N$.  On some sufficiently rich probability space consider a family
$A_j$ of disjoint sets with $P(A_j)=N^{-1}$ for $1\le j\le N$ and a
Rademacher variable $\varepsilon$ independent of $A_1,\dots, A_N$.  Let $(\varepsilon_i, I_{A_{i,1}},\dots ,I_{A_{i,N}})$ be independent copies of $(\varepsilon, I_{A_1},\dots ,I_{A_N})$ and set
\begin{equation*}
X_i=\varepsilon_i\sqrt{N}\sum_{j=1}^NI_{A_{i,j}}x_j.
\end{equation*}
Then
\begin{equation}\label{X}
E\|X_1\|^2=\sum_{j=1}^N\|x_j\|^2.
\end{equation}
Observe that
\begin{equation*}
\frac{S_n}{\sqrt n}
=\sum_{j=1}^N Z_{n,j}x_j
\end{equation*}
where
\begin{equation*}
Z_{n,j}=\frac 1{\sqrt{n}}\sum_{i=1}^n \varepsilon_i\sqrt{N}I_{A_{i,j}}.
\end{equation*}
By the central limit theorem in $\Bbb R^N$,
\begin{equation*}
(Z_{n,1},\dots ,Z_{n,N})\overset{d}\to (g_1,\dots ,g_N)
\end{equation*}
where $g_1,\dots ,g_N$ are IID random variables with standard normal  distribution.  Thus
\begin{equation*}
\frac{S_n}{\sqrt n}\overset{d}\to\sum_{j=1}^N g_jx_j.
\end{equation*}
Since $\|X_1\|$ is bounded, it then follows from Theorem 5.1 of \cite{dag} that
\begin{equation}\label{gx}
\frac{E\|S_n\|^2}{n}\to E\|\sum_{j=1}^N g_jx_j\|^2.
\end{equation}
Both results now follow from \eqref{X}, \eqref{gx} and the  discussion preceeding  the proposition.
\end{proof}


\section{Main Results}
\setcounter{equation}{0}
\renewcommand{\theequation}{3.\arabic{equation}}

The following inequality related to (\ref{1.4IID}), and which is
valid in any Banach space, will prove useful below;  for any $X\in
L^2$
\begin{equation}\label{kol}
Var(\|S_n\|) \le 4nE\|X\|^2.
\end{equation}
This was first observed by de Acosta \cite{da}.
The proof is based on elementary  properties of conditional expectations  and
martingales. Alternatively \eqref{kol} may be viewed as a simple consequence of the Efron-Stein
inequality, see \cite{st}.

Before stating and proving the  main results, we note that if
$X\in L_0^2$ then
\begin{align*}
r^2\|M(r)\|&=r\|E(X:\|X\|\le r)\|\\ &=r\|E(X:\|X\|> r)\|\\
&\le rE(\|X\|:\|X\| > r)\\ &\le E(\|X\|^2:\|X\| > r)\to 0
\end{align*} as $r\to\infty$, hence by (\ref{Q2})
\begin{equation}
\lim_{r\to\infty}r^2h(r)=E\|X\|^2\quad \text{as }
r\to\infty.\label{hL_0^2}
\end{equation}

\begin{thm}\label{T1a}
The following are equivalent:
\begin{equation}
B\text{  is type }2;  \label{1t1a}
\end{equation}
There is a constant $c$ such that for all $X\in L$
and all $r>0$
\begin{equation} ET_r\ge \frac{c}{h(r)}; \label{1t2a}
\end{equation}
There is a constant $c$ such that for all  $X\in L$,
all $r>0$ and all $n\ge 1$
\begin{equation}
P(T_r\le n)\le  cnh(r); \label{1t3a}
\end{equation}
There is a constant $c$ such that for all  $X\in
L^2_0$
\begin{equation} \liminf_{r\to\infty}\frac{ET_r}{r^2}\ge \frac{c}{E\|X\|^2}; \label{1t4a}
\end{equation}
There is a constant $c$ such that for all  $X\in
L^2_0$
\begin{equation}
\limsup_{r\to\infty}\sup_{n\ge 1}\frac{r^2P(T_r\le n)}{n}\le
cE\|X\|^2. \label{1t5a}
\end{equation}
The constants appearing in these statements may be taken to depend only on each other.
\end{thm}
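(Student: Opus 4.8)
The plan is to prove the five statements equivalent through a single cycle of implications. Since Lemma~\ref{L2} already identifies \eqref{1t2a} with \eqref{1t3a}, and since the asymptotic statements \eqref{1t4a} and \eqref{1t5a} constrain only $X\in L^2_0$, I regard the latter two as soft weakenings of the uniform estimates and aim for the loop \eqref{1t1a}$\,\Rightarrow\,$\eqref{1t2a}$\,\Leftrightarrow\,$\eqref{1t3a}$\,\Rightarrow\,$\eqref{1t5a}$\,\Rightarrow\,$\eqref{1t4a}$\,\Rightarrow\,$\eqref{1t1a}. Each arrow carries its constant explicitly, so the closing sentence on the dependence of constants comes for free. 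The genuine content sits at the two ends of the loop: deriving the uniform lower bound \eqref{1t2a} from type $2$, and recovering type $2$ from the asymptotic lower bound \eqref{1t4a}; the intermediate arrows are routine.

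For the middle arrows I would restrict the uniform estimates to $X\in L^2_0$ and invoke \eqref{hL_0^2}, namely $r^2h(r)\to E\|X\|^2$. Dividing \eqref{1t3a} by $n$ and multiplying by $r^2$ gives $r^2P(T_r\le n)/n\le c\,r^2h(r)$ uniformly in $n$, and passing to the $\limsup$ in $r$ yields \eqref{1t5a}. For \eqref{1t5a}$\,\Rightarrow\,$\eqref{1t4a} I would rerun the second half of the proof of Lemma~\ref{L2} asymptotically: \eqref{1t5a} gives $P(T_r\le n)\le (c+o(1))E\|X\|^2\,n/r^2$ for large $r$, so taking $n$ comparable to $r^2/(2cE\|X\|^2)$ forces $P(T_r>n)\ge\tfrac12$ and hence $ET_r\ge r^2/(4(c+o(1))E\|X\|^2)$, which is \eqref{1t4a}.

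The reverse implication \eqref{1t4a}$\,\Rightarrow\,$\eqref{1t1a} is where I would use de Acosta's inequality \eqref{kol} together with Lemma~\ref{L1}. By Proposition~\ref{Astype} it is enough to bound $\limsup_n E\|S_n\|^2/n$ for $X\in L^2_0$. Since \eqref{kol} gives $E\|S_n\|^2\le 4nE\|X\|^2+(E\|S_n\|)^2$, it suffices to control $E\|S_n\|$. Now $\{\|S_n\|>r\}\subseteq\{T_r\le n\}$, so
\[
E\|S_n\|=\int_0^\infty P(\|S_n\|>r)\,dr\le\int_0^\infty P(T_r\le n)\,dr,
\]
and \eqref{L1a} together with \eqref{1t4a} bounds the integrand by $\min\{1,\,Cn E\|X\|^2/r^2\}$ once $r$ is large. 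Splitting the integral at $r\asymp\sqrt{n}$ and optimizing yields $E\|S_n\|\le C'\sqrt{nE\|X\|^2}$, whence $E\|S_n\|^2\le C''nE\|X\|^2$ and Proposition~\ref{Astype} delivers type $2$.

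The main obstacle is the forward implication \eqref{1t1a}$\,\Rightarrow\,$\eqref{1t2a}, a Pruitt-type truncation now driven by the type $2$ inequality \eqref{1.4IID}. Fixing $X$ and $r$, I would set $X_i'=X_i\mathbf{1}_{\{\|X_i\|\le r\}}$, $\mu=EX_1'$ (so $\|\mu\|=r\|M(r)\|$) and $\tilde S_k=\sum_{i\le k}(X_i'-\mu)$. On the event that no increment exceeds $r$, an event of probability at least $1-nG(r)$, one has $S_k=\sum_{i\le k}X_i'$, so for $n\le 1/(2\|M(r)\|)$ the centering contributes at most $n\|\mu\|\le r/2$ and $\max_{k\le n}\|S_k\|\le\max_{k\le n}\|\tilde S_k\|+r/2$. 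Applying Doob's maximal inequality to the submartingale $\|\tilde S_k\|^2$ and then \eqref{1.4IID} gives
\[
P\Bigl(\max_{k\le n}\|\tilde S_k\|>r/2\Bigr)\le\frac{4E\|\tilde S_n\|^2}{r^2}\le\frac{4cn\,E\|X_1'-\mu\|^2}{r^2}\le 4cn\,K(r),
\]
since $E\|X_1'-\mu\|^2\le E(\|X\|^2;\|X\|\le r)=r^2K(r)$. Hence $P(T_r\le n)\le nG(r)+4cnK(r)\le Cnh(r)$. Choosing $n_0=\lfloor\delta/h(r)\rfloor$ with $\delta$ small, which also respects the constraint $n_0\le 1/(2\|M(r)\|)$ because $h(r)\ge\|M(r)\|$, forces $P(T_r\le n_0)\le\tfrac12$, so $ET_r\ge n_0/2\ge c'/h(r)$ once $h(r)$ is small, while the remaining range of $r$ is absorbed using $ET_r\ge 1$ and $h(r)\le 2$. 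It is precisely here that type $2$ is indispensable, through the linear-in-$n$ bound on $E\|\tilde S_n\|^2$; the rest of the argument is soft, and this truncation step is the crux of the theorem.
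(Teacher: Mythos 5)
Your overall architecture is essentially the paper's own proof, just with the easy arrows re-routed: the two substantive implications are the same (type $2$ $\Rightarrow$ the uniform estimate via truncation, Doob's maximal inequality and the type $2$ inequality \eqref{1.4IID}; and the return to type $2$ via the integral representation of $E\|S_n\|$, de Acosta's inequality \eqref{kol} and Proposition \ref{Astype}). The paper proves \eqref{1t2a}$\Rightarrow$\eqref{1t4a}$\Rightarrow$\eqref{1t5a} and then \eqref{1t5a}$\Rightarrow$\eqref{1t1a}, whereas you go \eqref{1t3a}$\Rightarrow$\eqref{1t5a}$\Rightarrow$\eqref{1t4a}$\Rightarrow$\eqref{1t1a}; this is harmless, since your step \eqref{1t4a}$\Rightarrow$\eqref{1t1a} simply re-manufactures the content of \eqref{1t5a} through Lemma \ref{L1} before running the same integral argument, and your asymptotic rerun of Lemma \ref{L2} for \eqref{1t5a}$\Rightarrow$\eqref{1t4a} is correct. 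Likewise, proving \eqref{1t2a} only for $n\le 1/(2\|M(r)\|)$ and recovering all $n$ from Lemma \ref{L2} is equivalent to the paper's case split on $\|EU_n\|\gtrless r/2$, since $\|EU_n\|=nr\|M(r)\|$.

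There is, however, one genuine error in the truncation step. You assert
\[
E\|X_1'-\mu\|^2\le E\bigl(\|X\|^2;\|X\|\le r\bigr)=r^2K(r),
\]
i.e.\ that centering at the mean does not increase the second moment of the norm. That is a Hilbert space identity ($E\|X-EX\|^2=E\|X\|^2-\|EX\|^2$ there); in a general Banach space it is false, even in type $2$ spaces, which is exactly the setting your implication must cover. For a counterexample take $B=(\mathbb{R}^2,\|\cdot\|_\infty)$ (finite dimensional, hence type $2$) and $X$ uniform on the three points $(1,1)$, $(-1,1)$, $(0,-1)$: then $E\|X\|^2=1$, while $EX=(0,1/3)$ and the centered values $(1,2/3)$, $(-1,2/3)$, $(0,-4/3)$ give $E\|X-EX\|^2=(1+1+16/9)/3=34/27>1$. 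So the displayed chain of inequalities in your Doob step is not valid as written. The repair is cheap and is precisely what the paper does: by the triangle inequality $\|X_1'-\mu\|\le\|X_1'\|+E\|X_1'\|$, hence
\[
E\|X_1'-\mu\|^2\le E\|X_1'\|^2+3\bigl(E\|X_1'\|\bigr)^2\le 4E\|X_1'\|^2=4r^2K(r),
\]
which only inflates your bound from $4cnK(r)$ to $16cnK(r)$ and leaves the rest of your argument (the choice $n_0=\lfloor\delta/h(r)\rfloor$, the constraint check via $h(r)\ge\|M(r)\|$, and the absorption of the range where $h(r)$ is not small using $ET_r\ge 1$, $h(r)\le 2$) intact. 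With that one patch the proposal is a correct proof.
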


\begin{proof}
The dependence of the constants on each other follows from the proof.  To avoid repitition this
will not be pointed out explicitly during the proof, but the
reader is asked to keep this in mind.

It follows from Lemma \ref{L2} that
(\ref{1t2a}) is equivalent to (\ref{1t3a}). Clearly
(\ref{1t2a}) implies (\ref{1t4a}) by \eqref{hL_0^2}, while (\ref{1t4a}) implies
(\ref{1t5a}) by (\ref{L1a}).  Thus it suffices to prove
(\ref{1t1a}) implies (\ref{1t3a}) and (\ref{1t5a}) implies
(\ref{1t1a}).

Assume (\ref{1t1a}). Let $Y_k=X_kI(\|X_k\|\le r)$ and
$$ U_n=\sum_{k=1}^n Y_k. $$
Observe that
\begin{equation}\label{T1a1}
\|EU_n\|=n \|EXI(\|X\|\le r)\|=nr\|M(r)\|
\end{equation}
while by (\ref{1t1a})
\begin{align*}\label{T1a2}
E\|U_n-EU_n\|^2 &\le cnE\|Y_1-EY_1\|^2 \\
&\le cnE(\|Y_1\|+E\|Y_1\|)^2 \\
&= cn(E\|Y_1\|^2+3(E\|Y_1\|)^2) \\
&\le 4cnE\|Y_1\|^2 \\
&= 4cnr^2K(r).
\end{align*}
Now if $\|EU_n\|\le r/2$, then
\begin{align*}
P(T_r\le n)&= P(\max_{1\le k\le n}\|S_k\|>r)\\
&\le P(\max_{1\le k\le n}\|X_k\|>r)+P(\max_{1\le k\le n}\|U_k\|>r)\\
&\le nG(r) + P(\max_{1\le k\le n}\|U_k-EU_k\|>r/2)\\
&\le nG(r) + \frac{4E\|U_n-EU_n\|^2}{r^2}\quad \text { by Doob's inequality}\\
&\le nG(r) + 16cnK(r) \\
&\le (16c+1)nh(r).
\end{align*}
On the other hand if $\|EU_n\|\ge r/2$, then by (\ref{T1a1})
\begin{equation*}
P(T_r\le n)\le 1\le \frac {2\|EU_n\|}r = 2n\|M(r)\| \le 2nh(r).
\end{equation*}
Thus in either case (\ref{1t3a}) holds.

Now assume (\ref{1t5a}) and let $X\in L^2_0$. Then for $n$
sufficiently large (depending on $X$)
\begin{align*}
E\|S_n\| &= \int_0^{\infty} P(\|S_n\|>r)\ dr \\
&\le \int_0^{\infty} P(T_r\le n)\ dr \\
&\le \int_0^{\sqrt{cnE\|X\|^2}} \ dr + \int_{\sqrt{cnE\|X\|^2}}^{\infty} \frac {2cnE\|X\|^2}{r^2}\ dr \\
&\le 3\sqrt{cnE\|X\|^2}.
\end{align*}
Hence by (\ref{kol}) for such $n$,
\begin{equation*}
E\|S_n\|^2= Var(\|S_n\|) + (E\|S_n\|)^2 \le 4nE\|X\|^2+ 9cnE\|X\|^2
\end{equation*}
which verifies (\ref{ATIID1}).
\end{proof}

Fix $r>0$ and set
\begin{align}
\hat{X_k} =& X_kI(\|X_k\|\le 3r)+\frac{3rX_k}{\|X_k\|}I(\|X_k\|>3r) \label{hat}\\
\hat{S_n} &= \sum_{k=1}^n \hat{X_k}, \quad \hat{T_r} = \inf\{n:\|\hat{S_n}\|>r\}. \label{Shat}
\end{align}
Since a jump of any size greater than or equal to $3r$ results in
$S_n$ leaving the ball of radius $r$, it follows that
$\hat{T_r}=T_r.$
It is clear from the definition of $\hat{X}$ that
\begin{equation}\label{QX}
E\|\hat{X}\|^2= 9r^2(G(3r)+K(3r)).
\end{equation}
Observe further that
\begin{align}\label{MXhat}
E\|\hat{X}-E\hat{X}\|^2 &\ge E\big (\|\hat{X}\|-\| E\hat{X}\|\big )^2 \notag \\
&= E\|\hat{X}\|^2-2 E\|\hat{X}\|\| E\hat{X}\| + \| E\hat{X}\|^2 \notag \\
&\ge E\|\hat{X}\|^2-6r\| E\hat{X}\|.
\end{align}
while
\begin{align}\label{Mhatt}
\big|\|E\hat{X}\|-\|EXI(\|X\|\le 3r)\|\big| & \le
\|E\hat{X}-EXI(\|X\|\le 3r)\|\notag\\ &\le 3rG(3r).
\end{align}

\begin{thm}\label{cT1a}
The following are equivalent:
\begin{equation}
B\text{  is cotype }2;  \label{c1t1a}
\end{equation}
There is a constant $c$ such that for all  $X\in L$
and all $r>0$
\begin{equation} ET_r\le \frac{c}{h(r)}; \label{c1t2a}
\end{equation}
There is a constant $c$ such that for all  $X\in L$,
all $r>0$ and all $n\ge 1$
\begin{equation}
P(T_r > n)\le  \frac{c}{nh(r)}; \label{c1t3a}
\end{equation}
There is a constant $c$ such that for all  $X\in
L^2_0$
\begin{equation} \limsup_{r\to\infty}\frac{ET_r}{r^2}\le \frac{c}{E\|X\|^2}; \label{c1t4a}
\end{equation}
There is a constant $c$ such that for all  $X\in
L^2_0$
\begin{equation}
 \limsup_{r\to\infty}\sup_{n\ge 1}\frac{nP(T_r > n)}{r^2}\le \frac{c}{E\|X\|^2}. \label{c1t5a}
\end{equation}
The constants appearing in these statements may be taken to depend only on each other.
\end{thm}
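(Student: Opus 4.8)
The plan is to mirror the structure of the proof of Theorem \ref{T1a}, exploiting the symmetry between type 2 and cotype 2. By Lemma \ref{L3}, (\ref{c1t2a}) and (\ref{c1t3a}) are equivalent. The implication (\ref{c1t2a})$\Rightarrow$(\ref{c1t4a}) follows from (\ref{hL_0^2}), since for $X\in L_0^2$ we have $r^2 h(r)\to E\|X\|^2$, and then (\ref{c1t4a})$\Rightarrow$(\ref{c1t5a}) follows from (\ref{L1b}) of Lemma \ref{L1}, which bounds $P(T_r>n)$ by $ET_r/n$. Thus it suffices to prove the cyclic pair (\ref{c1t1a})$\Rightarrow$(\ref{c1t3a}) and (\ref{c1t5a})$\Rightarrow$(\ref{c1t1a}).

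For (\ref{c1t1a})$\Rightarrow$(\ref{c1t3a}) I would use the truncated walk $\hat{S_n}$ from (\ref{hat})--(\ref{Shat}), which is where the real work differs from the type 2 case. The point of passing to $\hat X$ is that it is bounded, so $\hat S_n$ has finite second moment even when $X$ does not, and $\hat T_r=T_r$. The strategy is to produce a \emph{lower} bound on the probability that $\|\hat S_n\|$ has already grown large, forcing $T_r$ to be small. First I would center: write $\hat S_n = (\hat S_n - nE\hat X) + nE\hat X$. If the drift term $n\|E\hat X\|$ is comparable to $r$, then the walk must exit quickly and $P(T_r>n)$ is controlled directly. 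Otherwise, in the driftless regime, I would invoke cotype 2 in the form (\ref{1.4IID}) to get $E\|\hat S_n - nE\hat X\|^2 \ge c' n E\|\hat X - E\hat X\|^2$, and then use (\ref{MXhat}) and (\ref{QX}) together with (\ref{Mhatt}) to replace $E\|\hat X - E\hat X\|^2$ by a quantity comparable to $r^2 h(3r)$, hence to $r^2 h(r)$ via (\ref{hd}). A second-moment (Paley--Zygmund type) or Chebyshev argument should then convert this lower bound on $E\|\hat S_n\|^2$ into a lower bound on $P(\|\hat S_n\|>r)$, and summing or using the Markov property over blocks of length $n$ yields the decay $P(T_r>n)\le c/(nh(r))$.

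For (\ref{c1t5a})$\Rightarrow$(\ref{c1t1a}) I expect the argument to run parallel to the corresponding step in Theorem \ref{T1a}, but producing a lower bound on $E\|S_n\|$ rather than an upper bound. Fix $X\in L_0^2$. The hypothesis (\ref{c1t5a}) gives, for large $r$, a bound $nP(T_r>n)\le (c+o(1))n/(r^2 E\|X\|^2)$, equivalently $P(T_r>n)$ small, which forces $P(\|S_n\|>r)\le P(T_r\le n)$ to stay large for $r$ up to order $\sqrt{n E\|X\|^2}$. Integrating the tail, $E\|S_n\|=\int_0^\infty P(\|S_n\|>r)\,dr$, and bounding the integrand below on the range $r\le \text{const}\cdot\sqrt{nE\|X\|^2}$, I would obtain $E\|S_n\|\ge c''\sqrt{nE\|X\|^2}$. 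Combining with (\ref{kol}), which gives $E\|S_n\|^2 = Var(\|S_n\|)+(E\|S_n\|)^2 \ge (E\|S_n\|)^2$, then yields $\liminf_n E\|S_n\|^2/n \ge c''' E\|X\|^2$, which is exactly (\ref{ATIID2}); Proposition \ref{Astype} then delivers cotype 2.

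The main obstacle is the driftless case of (\ref{c1t1a})$\Rightarrow$(\ref{c1t3a}): converting the cotype lower bound on the second moment $E\|\hat S_n\|^2$ into a genuine lower bound on the exit probability. Unlike the type 2 direction, where Doob's maximal inequality gives a clean \emph{upper} bound on $P(\max_k\|U_k\|>r/2)$, here I need a lower bound on $P(\max_k\|\hat S_k\|>r)$, which is more delicate since a large second moment can in principle be carried by a low-probability event. Handling this will require care in choosing the block length $n$ so that $nr^2 h(r)$ is of constant order, ensuring that the centered walk's typical displacement is comparable to $r$, and then applying a Paley--Zygmund-type inequality (for which the boundedness $\|\hat X\|\le 3r$ gives the needed fourth-moment or uniform-integrability control) to guarantee the exit happens with probability bounded away from zero.
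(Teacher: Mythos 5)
Your reductions are exactly the paper's: Lemma \ref{L3} gives the equivalence of \eqref{c1t2a} and \eqref{c1t3a}, \eqref{hL_0^2} gives \eqref{c1t2a}$\Rightarrow$\eqref{c1t4a}, and \eqref{L1b} gives \eqref{c1t4a}$\Rightarrow$\eqref{c1t5a}; so everything turns on the two remaining implications, and both of your sketches for these have problems.

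The implication \eqref{c1t5a}$\Rightarrow$\eqref{c1t1a} as you describe it is broken, precisely at the point where you try to mirror the type 2 argument. You propose to bound the integrand in $E\|S_n\|=\int_0^\infty P(\|S_n\|>r)\,dr$ \emph{from below} for $r$ up to order $\sqrt{nE\|X\|^2}$, using the fact that $P(T_r\le n)$ is close to $1$ there. But the inequality you cite, $P(\|S_n\|>r)\le P(T_r\le n)$, goes the wrong way: the hypothesis controls exit probabilities, and a walk that exits the ball of radius $r$ before time $n$ may well be back inside it at time $n$, so largeness of $P(T_r\le n)$ gives no lower bound at all on $P(\|S_n\|>r)$. (The tail-integral argument works in the type 2 direction of Theorem \ref{T1a} exactly because there one needs the \emph{upper} bound.) The repair is to work with the running maximum $M_n=\max_{1\le k\le n}\|S_k\|$, whose distribution is tied to $T_r$ exactly, $\{M_n\le r\}=\{T_r>n\}$: this is what the paper does. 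From \eqref{c1t5a}, $P\big(M_n^2\le nE\|X\|^2/(4c)\big)\le 1/2$ for $n$ large, hence $EM_n^2\ge nE\|X\|^2/(8c)$, and since $\|S_k\|$ is a submartingale (as $EX=0$), Doob's $L^2$ maximal inequality gives $4E\|S_n\|^2\ge EM_n^2$, which verifies \eqref{ATIID2} and, via Proposition \ref{Astype}, cotype 2. Your appeal to \eqref{kol} in this step is idle ($E\|S_n\|^2\ge(E\|S_n\|)^2$ is just Jensen), and in any case it rests on the unproved lower bound for $E\|S_n\|$.

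For \eqref{c1t1a}$\Rightarrow$\eqref{c1t3a} your Paley--Zygmund-plus-blocking plan is genuinely different from the paper's, and the step you yourself flag as the main obstacle is where the missing tools lie. Concretely: (i) the Paley--Zygmund ratio needs $E\|\hat S_n-E\hat S_n\|^4\le C\big(E\|\hat S_n-E\hat S_n\|^2\big)^2$, and boundedness $\|\hat X\|\le 3r$ of the \emph{increments} does not by itself control the fourth moment of the \emph{sum}; you would need Hoffmann--J{\o}rgensen's inequality, together with the cotype lower bound at block length $n\approx A/h(r)$ for large $A$, to absorb the $E\max_k\|\hat X_k-E\hat X\|^4$ term. (ii) In the drift case, ``the walk must exit quickly'' is not automatic: one needs concentration of $\|\hat S_n\|$ about $E\|\hat S_n\|\ge n\|E\hat X\|$, which is exactly de Acosta's inequality \eqref{kol}, plus the bound \eqref{hm} relating $\|E\hat X\|$ to $rh(r)$; also the blocking iteration requires the exit estimate at radius $2r$, as in the proof of Lemma \ref{L1}. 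The paper sidesteps the whole lower-bound-on-exit-probability difficulty by invoking Klass's stopped-sum theorem ((1.3) of \cite{k}, or Theorem 2.3.1 of \cite{dg}): with $\phi(n)=E\max_{k\le n}\|\hat S_k-E\hat S_k\|^2$, cotype gives $\phi(n)\ge cnE\|\hat X-E\hat X\|^2$, Klass gives $E\max_{k\le T_r}\|\hat S_k-E\hat S_k\|^2\ge\alpha E\phi(T_r)\ge\alpha c\,ET_r\,E\|\hat X-E\hat X\|^2$, and the left side is at most $64r^2$ deterministically since $\|\hat S_k\|\le 4r$ for $k\le T_r$; this converts the cotype moment bound directly into an upper bound on $ET_r$ (whence \eqref{c1t3a} by Markov), with only the drift-dominated case needing the separate Chebyshev argument via \eqref{kol}. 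So your route is not hopeless, but as written it is missing the two ingredients above, whereas the paper's argument needs neither.
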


\begin{proof} The dependence of the constants on each other is again a consequence of the proof.

It follows from Lemma \ref{L3} that
(\ref{c1t2a}) is equivalent to (\ref{c1t3a}).  Clearly
(\ref{c1t2a}) implies (\ref{c1t4a}) by \eqref{hL_0^2}, while (\ref{c1t4a}) implies
(\ref{c1t5a}) by (\ref{L1b}). Thus it suffices to prove
(\ref{c1t1a}) implies (\ref{c1t3a}) and (\ref{c1t5a}) implies
(\ref{c1t1a}).

Assume (\ref{c1t1a}). Let
\begin{equation*}
\phi(n)= E\max_{1\le k\le n}\big (\|\hat{S_k}-E\hat{S_k}\|\big
)^2.
\end{equation*}
Then
\begin{align}\label{llbb}
\phi(n) &\ge \max_{1\le k\le n}E\big (\|\hat{S_k}-E\hat{S_k}\|\big )^2 \notag\\
&\ge  cnE\|\hat{X}-E\hat{X}\|^2
\end{align}
by (\ref{c1t1a}). By (1.3) of Klass
\cite{k} (alternatively see Theorem 2.3.1 of \cite{dg}),
there exists an absolute constant $\alpha$ such that
\begin{align}\label{K1}
E\max_{1\le k\le T_r}\big (\|\hat{S_k}-E\hat{S_k}\|\big )^2 &\ge \alpha E\phi(T_r)\notag\\
&\ge \alpha cET_r E\|\hat{X}-E\hat{X}\|^2
\end{align}
by \eqref{llbb}.  Since $\|\hat{S_k}\|\le 4r$ for all $1\le k \le T_r$, it follows
that
\begin{equation*}
E\max_{1\le k\le T_r}\big (\|\hat{S_k}-E\hat{S_k}\|\big )^2  \le
E\max_{1\le k\le T_r}\big (\|\hat{S_k}\|+\|E\hat{S_k}\|\big )^2
\le 64r^2.
\end{equation*}
Hence by (\ref{MXhat} ) and (\ref{K1})
\begin{equation}\label{100}
\alpha c  ET_r \big (E\|\hat{X}\|^2-6r\| E\hat{X}\| \big )\le 64r^2.
\end{equation}
We now consider two cases:

\item{Case 1;} $E\|\hat{X}\|^2>12r\| E\hat{X}\|.$

Then by (\ref{100})
\begin{equation*} \label{101}
\alpha c ET_r E\|\hat{X}\|^2 \le 128r^2.
\end{equation*}
On the other hand by (\ref{QX}) and (\ref{Mhatt})
\begin{align*}
9r^2h(3r) &=  E\|\hat{X}\|^2+3r\|EXI(\|X\|\le 3r)\| \\
&\le E\|\hat{X}\|^2+ 3r\| E\hat{X}\|+ 9r^2G(3r) \\
&\le (9/4) E\|\hat{X}\|^2.
\end{align*}
Hence by (\ref{hd})
\begin{equation*} \label{102}
ET_r \le \frac {576}{\alpha ch(r)}.
\end{equation*}
Thus (\ref{c1t3a}) follows by Markov's inequality.

\item{Case 2;} $E\|\hat{X}\|^2\le 12r\| E\hat{X}\|.$

First observe that by (\ref{Mhatt}) and (\ref{QX})
\begin{equation*}
\|EXI(\|X\|\le 3r)\|\le \|E\hat{X}\|+3rG(3r)  \le \|E\hat{X}\|+ \frac {E\|\hat{X}\|^2}{3r}\ \le 5\|E\hat{X}\|,
\end{equation*}
hence
\begin{align}\label{hm}
3rh(3r) &= \frac {\|E\hat{X}\|^2}{3r}+\|EXI(\|X\|\le 3r)\|
\le 9\|E\hat{X}\|.
\end{align}

\item{Subcase 2a;} $n\|E\hat{X}\|\le 3r.$

Then
\begin{equation*}
P(T_r>n) \le 1
\le \frac {3r}{n\|E\hat{X}\|}
\le \frac {9}{nh(3r)} \le \frac {162}{nh(r)}
\end{equation*}
by (\ref{hd}).

\item{Subcase 2b;} $n\|E\hat{X}\|>3r.$

In this case we have
\begin{equation*}
E\|\hat S_n\|\ge \|E\hat S_n\|=n\|E\hat X\|.
\end{equation*}
In particular $E\|\hat S_n\| > 3r$ and so
\begin{align*}
P(T_r>n) &\le P(\|\hat S_n\| \le r) \\
&\le P\big ( 3(E\|\hat S_n\|-\|\hat S_n\|) > 2E\|\hat S_n\|) \\
&\le P\big (3\big | \|\hat S_n\| - E\|\hat S_n\|\big | > 2n\|E\hat{X}\|\big ) \\
&\le\frac{9 Var(\|\hat S_n\|)}{\big (2n\|E\hat{X}\|\big )^2}  \\
&\le\frac{36nE\|\hat X\|^2}{\big (2n\|E\hat{X}\|\big )^2}\quad \text { by (\ref{kol})} \\
&\le \frac {108r}{n\|E\hat{X}\|}\quad \text { by Case 2} \\
&\le \frac {5832}{nh(r)}
\end{align*}
by (\ref{hm}) and (\ref{hd}). Thus (\ref{c1t3a}) holds.

Now assume (\ref{c1t5a}).  Let
\begin{equation*}
M_n=\max_{1\le k\le n}\|S_k\|.
\end{equation*}
Since $P\{M_n\le r\}=\{T_r> n\}$ we have that for sufficiently large $n$ (depending on $X$)
\begin{equation*}
P(M_n^2 \le \frac {nE\|X\|^2}{4c}) \le 1/2.
\end{equation*}
Hence by Doob's inequality, for such $n$
\begin{align*}
4E\|S_n\|^2 &\ge EM_n^2 \\
&\ge E(M_n^2; M_n^2 > \frac {nE\|X\|^2}{4c})\\
&\ge \frac {nE\|X\|^2}{8c}.
\end{align*}
which verifies (\ref{ATIID2}).
\end{proof}


\section{Examples}
\setcounter{equation}{0}
\renewcommand{\theequation}{4.\arabic{equation}}

As was indicated earlier, in order to extend Pruitt's results to
an arbitrary separable Banach space a function other than $h$ is
needed to measure the size of $ET_r$. In this section we consider the
asymptotic behavior of $ET_r$ when $B=\ell^p$ for $2\le p < \infty$.  In particular we obtain sharp
bounds in (\ref{1t4a}) for symmetric  $X\in L^2$
showing that $1/h(r)$ underestimates the size of $ET_r$ when $p>2$.

Let $X,X_1,X_2,\dots$  be a sequence of IID symmetric random variables
taking values in $\ell^p$ and let $X_{i,j}$ denote the $jth$ coordinate
of $X_i$.
In the following $\approx_{p}$ means the ratio of the two quantities is bounded above and below by constants which depend only on $p$.
\begin{lem}\label{est}
Let $\{X_i\}$ be a sequence of IID symmetric random variables
taking values in $\ell^p$ for $1\le p <\infty$. Then
\begin{equation*}\label{EST}
E\|S_n\|^p\approx_{p} \sum_{j=1}^\infty E\big(\sum_{i=1}^n X_{i,j}^2\big)^{p/2}.
\end{equation*}
\end{lem}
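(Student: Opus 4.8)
The plan is to prove the equivalence $E\|S_n\|^p \approx_p \sum_{j=1}^\infty E\big(\sum_{i=1}^n X_{i,j}^2\big)^{p/2}$ by exploiting symmetry to introduce Rademacher randomization and then applying a scalar Khintchine-type inequality coordinatewise, combined with Fubini's theorem. First I would observe that since the $\ell^p$ norm satisfies $\|S_n\|^p = \sum_{j=1}^\infty |S_{n,j}|^p$ where $S_{n,j}=\sum_{i=1}^n X_{i,j}$ is the $j$th coordinate of the partial sum, Tonelli's theorem gives
\begin{equation*}
E\|S_n\|^p = \sum_{j=1}^\infty E\Big|\sum_{i=1}^n X_{i,j}\Big|^p.
\end{equation*}
This reduces the whole problem to a scalar (real-valued) estimate, applied separately in each coordinate $j$ and then summed.

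The key scalar step is the following. Because each $X_i$ is symmetric, the sequence $(X_{i,j})_{i}$ has the same distribution as $(\varepsilon_i X_{i,j})_i$ for an independent Rademacher sequence $\{\varepsilon_i\}$, and this holds jointly across $i$ conditionally on the $X_i$. Conditioning on the values of $\{X_{i,j}\}_i$ and applying Khintchine's inequality to the Rademacher sum $\sum_i \varepsilon_i X_{i,j}$ — which has $L^p$ norm comparable (with constants depending only on $p$) to its $\ell^2$ norm $\big(\sum_i X_{i,j}^2\big)^{1/2}$ — yields
\begin{equation*}
E\Big|\sum_{i=1}^n X_{i,j}\Big|^p \approx_p E\Big(\sum_{i=1}^n X_{i,j}^2\Big)^{p/2}.
\end{equation*}
Taking the expectation over $\{X_{i,j}\}_i$ preserves the two-sided bound since the Khintchine constants are uniform. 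Summing over $j$ and invoking the displayed Tonelli identity then gives the lemma, with the implicit constants being exactly the Khintchine constants for exponent $p$, hence depending only on $p$.

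The main obstacle I anticipate is justifying the reduction to Rademacher variables cleanly when $1\le p<2$, where Khintchine's inequality still holds but the lower bound requires a little more care, and in handling the infinite sum over $j$ rigorously: one must confirm that the two-sided comparison survives the summation, which it does precisely because the constants are independent of $j$ (and of $n$), but this should be stated rather than glossed over. A secondary technical point is verifying that symmetry of the $\ell^p$-valued $X_i$ transfers to the joint symmetry of the coordinate sequences needed for the conditional Rademacher argument; this follows because replacing each $X_i$ by $\varepsilon_i X_i$ (vector-valued sign flips) leaves the joint law invariant and acts coordinatewise as $X_{i,j}\mapsto \varepsilon_i X_{i,j}$. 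Apart from these points, the argument is a routine application of Khintchine and Fubini, so I would not expect to grind through the constants explicitly.
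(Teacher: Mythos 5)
Your proof is correct and is precisely the ``standard computation using the Khintchine--Kahane inequalities for Rademacher functions'' that the paper invokes while omitting the details: the reduction $E\|S_n\|^p=\sum_j E|S_{n,j}|^p$ by Tonelli, Rademacher randomization justified by the joint symmetry of the independent $X_i$, and the two-sided scalar Khintchine inequality applied conditionally in each coordinate, with constants depending only on $p$. Nothing essential is missing, so your argument matches the paper's intended (uncited-in-full) proof.
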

The proof is a standard computation using the Khintchine-Kahane inequalities for Rademacher functions, so we will omit it. The upper bound is recorded in Lemma 5.2 of Pisier and Zinn \cite{pz}.  Next we need Rosenthal's inequality (Theorem 1.5.9 of \cite{dg}) from which it follows that for $p\ge 2$
\begin{equation*}
E\big(\sum_{i=1}^n X_{i,j}^2\big)^{p/2}\approx_{p} (nEX_{1,j}^{2})^{p/2} + nE|X_{1,j}|^{p}.
\end{equation*}
Combining these two results we find that for $2\le p < \infty$
\begin{equation*}\label{ss}
E\|S_n\|^p \approx_{p} n^{p/2}\sum_{j=1}^\infty (EX_{1,j}^{2})^{p/2} + n\sum_{j=1}^\infty E|X_{1,j}|^{p}.
\end{equation*}
Now $\ell^p$ is type 2 when $2\le p < \infty$.  Thus by Proposition 9.24 of \cite{lt}, if $X\in L^2$ then $X$ is pregaussian .  In that case let $G(X)$ denote a Gaussian random variable with the same covariance structure as $X$.  Then
\begin{equation}\label{G(X)}
E\|G(X)\|^p = E|g|^p\sum_{j=1}^\infty (EX_{1,j}^{2})^{p/2},
\end{equation}
where $g$ has a standard normal distribution; see p 261 of \cite{lt}.  Since all $L^q$ norms of a Gaussian random variable are comparable (Corollary 3.2 of \cite{lt}), we can conclude that
\begin{equation}\label{festa}
E\|S_n\|^p \approx_{p} n^{p/2}(E\|G(X)\|^{2})^{p/2} + n E\|X\|^{p}.
\end{equation}
This is only useful if $E\|X\|^{p}<\infty$.  But since the constants in (\ref{festa}) do not depend on the random variable X
we can apply this estimate to $\hat{X}$, as defined in (\ref{hat}), and obtain that for any symmetric random variable $X$
\begin{equation}\label{fest}
E\|\hat{S}_n\|^p \approx_{p} n^{p/2}(E\|G(\hat{X})\|^{2})^{p/2} + n E\|\hat{X}\|^{p}.
\end{equation}
Observe also that since $\hat{X_1} = \xi_r X_1$ where
\ben
\xi_r=I(\|X_1\|\le 3r)+\frac{3r}{\|X_1\|}I(\|X_1\| > 3r),
\een
it follows from \eqref{G(X)} and monotone convergence that
\begin{equation}\label{Glt}
E\|G(\hat{X})\|^p\to E\|G({X})\|^p.
\end{equation}
It is not hard to show that  $G(\hat{X})\overset{d}\to G({X})$ and so in fact
$E\|G(\hat{X})\|^q\to E\|G({X})\|^q$ for all $q\ge 0$; see Theorem 3.8.11 of \cite{b}.
However \eqref{Glt}, together with the comparability of Gaussian norms, will
suffice for our needs below.

\begin{prop}\label{examp}
Let $\{X_i\}$ be a sequence of IID symmetric random variables
taking values in $\ell^p$ for $2\le p < \infty$, for which $E\|X\|^2<\infty$. Then there
is a constant $c$, depending only on $p$, such that for every $r>0$
\begin{equation}\label{t2ub}
ET_r \le \frac{cr^2}{E\|G(\hat{X})\|^2}.
\end{equation}
On the other hand there is a constant $c$, again depending only on $p$, such that
\begin{equation}\label{t2lb}
\liminf_{r\to\infty}\frac {ET_r}{r^2}\ge \frac{c}{E\|G({X})\|^{2}}.
\end{equation}
\end{prop}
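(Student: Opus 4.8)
The plan is to derive both bounds from the sharp two-term estimate \eqref{fest} on $E\|\hat{S}_n\|^p$ together with the machinery already assembled in Section 3. The key device in both directions will be to convert the moment estimate \eqref{fest} into a distributional statement about $\|\hat{S}_n\|$, and then to relate the exit time $T_r=\hat{T}_r$ to the probability that $\|\hat{S}_n\|$ has left the ball of radius $r$.

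For the upper bound \eqref{t2ub}, I would mimic the structure of Subcase 2b in the proof of Theorem \ref{cT1a}, but now using the precise norm estimate \eqref{fest} in place of the de Acosta bound \eqref{kol}. Since $X$ is symmetric, $\hat{X}$ is symmetric, so $E\hat{X}=0$ and $E\hat{S}_n=0$; this means the drift term that complicated the cotype argument disappears, and the relevant quantity is simply $E\|\hat{S}_n\|^p$. From \eqref{fest}, for $n$ with $n^{p/2}(E\|G(\hat{X})\|^2)^{p/2}$ dominating, one has $E\|\hat{S}_n\|^p \lesssim_p n^{p/2}(E\|G(\hat{X})\|^2)^{p/2}$, so by Markov's inequality applied to $\|\hat{S}_n\|^p$,
\begin{equation*}
P(\|\hat{S}_n\|\le r) \ge 1 - \frac{E\|\hat{S}_n\|^p}{r^p} \ge \tfrac12
\end{equation*}
once $n \le \beta_p\, r^2/E\|G(\hat{X})\|^2$ for a suitable constant $\beta_p$. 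Since $T_r=\hat{T}_r$ and $\{T_r>n\}=\{\max_{k\le n}\|\hat{S}_k\|\le r\}$, a Doob-type maximal bound or the splitting-into-blocks argument of Lemma \ref{L1} then forces $P(T_r>n)$ to decay once $n$ exceeds a multiple of $r^2/E\|G(\hat{X})\|^2$, from which summing $ET_r=\sum_n P(T_r>n)$ gives \eqref{t2ub}. The point is that the term $nE\|\hat{X}\|^p$ in \eqref{fest} is negligible in this range because the cutoff at $3r$ makes $E\|\hat{X}\|^p \le (3r)^{p-2}E\|\hat{X}\|^2$ comparable to a lower power of $r$.

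For the lower bound \eqref{t2lb}, I would run the reverse inequality: show that for all $n$ up to order $r^2/E\|G(X)\|^2$, the walk is very likely still inside the ball, so that $ET_r \ge \sum_{n} P(T_r>n)$ picks up contributions from all such $n$. Here the lower half of the two-sided estimate \eqref{fest}, namely $E\|\hat{S}_n\|^p \gtrsim_p n^{p/2}(E\|G(\hat{X})\|^2)^{p/2}$, is used together with the comparability of Gaussian moments to control the upper tail of $\|\hat{S}_n\|$ away from giving a premature exit; a Paley–Zygmund type second-moment argument, or rather a $p$-th versus $2p$-th moment comparison (again from Gaussian-norm comparability), should yield that $P(\|\hat{S}_n\|>r)$ stays bounded below $1$ for $n$ of order $r^2/E\|G(\hat{X})\|^2$. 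Finally I would pass $r\to\infty$ and invoke \eqref{Glt}, which guarantees $E\|G(\hat{X})\|^2\to E\|G(X)\|^2$, to replace $\hat{X}$ by $X$ in the liminf and obtain \eqref{t2lb}.

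The main obstacle I anticipate is the lower bound, specifically controlling $P(\|\hat{S}_n\|>r)$ from above uniformly over the relevant range of $n$. Unlike the upper bound, where Markov's inequality on a high moment is cheap, the lower bound needs an \emph{anticoncentration}-flavored estimate: I must rule out the possibility that $\|\hat{S}_n\|$ exits early despite its $p$-th moment being of the right order. The clean way through is to exploit that all $L^q$ norms of the Gaussian limit are comparable, so that the CLT-scaled quantity $\|\hat{S}_n\|/\sqrt{n}$ has moments of every order controlled by $E\|G(\hat{X})\|^2$; combined with the uniform integrability coming from the $3r$-truncation this should give the required two-sided concentration of $\|\hat{S}_n\|$ around $\sqrt{n}\,E\|G(\hat{X})\|$. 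Making this quantitative and uniform in $r$, rather than merely asymptotic, is where the real work lies.
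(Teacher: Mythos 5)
There is a genuine gap, and it is directional: your two arguments are attached to the wrong halves of the proposition, and the upper-bound argument as written cannot work. Markov's inequality applied to $\|\hat{S}_n\|^p$ bounds the exit probability from \emph{above}; it shows the walk is likely still \emph{inside} the ball for $n$ up to a multiple of $r^2/E\|G(\hat{X})\|^2$. That is exactly the kind of statement that proves the \emph{lower} bound \eqref{t2lb} (pass to the maximum via L\'evy's inequality for symmetric summands, or Doob's $L^p$ maximal inequality, to get $P(T_r>n)\ge 1/2$ in that range, sum over $n$, and use \eqref{Glt}); it can never prove the \emph{upper} bound \eqref{t2ub}. For \eqref{t2ub} you need a \emph{lower} bound on $P(\max_{1\le k\le n}\|\hat{S}_k\|>r)$ — an anticoncentration statement — before the block argument of Lemma \ref{L1} has anything to iterate; your claim that ``a Doob-type maximal bound or the splitting-into-blocks argument of Lemma \ref{L1} then forces $P(T_r>n)$ to decay'' is a non sequitur, since Doob also only bounds exit probabilities from above. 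Symmetrically, the Paley--Zygmund argument you place under the lower bound is not needed there (and the lower half of \eqref{fest} is useless for controlling the upper tail of $\|\hat{S}_n\|$); it is precisely the ingredient missing from your upper bound.

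Even relocated to the correct bound, the fixed-time Paley--Zygmund route is problematic: it needs an upper estimate on $E\|\hat{S}_n\|^{2p}$, whose Rosenthal-type bound contains the term $nE\|\hat{X}\|^{2p}$, and for $n\approx r^2/E\|G(\hat{X})\|^2$ this term dominates unless $E\|\hat{X}\|^2/E\|G(\hat{X})\|^2$ is bounded — which is exactly what fails when $p>2$ (this is the whole point of the examples in Section 4). The paper avoids fixed-time anticoncentration altogether by using Klass's inequalities for randomly stopped sums. For \eqref{t2ub}: with $\phi(n)=E\max_{1\le k\le n}\|\hat{S}_k\|^p$, inequality (1.3) of \cite{k} gives $E\max_{1\le k\le T_r}\|\hat{S}_k\|^p\ge \alpha_p E\phi(T_r)$; since $\phi(n)\ge c_p n^{p/2}(E\|G(\hat{X})\|^2)^{p/2}$ by \eqref{fest} and $\|\hat{S}_k\|\le 4r$ for $k\le T_r$, this yields $(4r)^p\ge \alpha_p c_p\, ET_r^{p/2}(E\|G(\hat{X})\|^2)^{p/2}$, and Jensen finishes. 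For \eqref{t2lb} it uses the companion inequality (1.10) of \cite{k1} in the opposite direction, together with $\phi(n)\le 4(E\|\hat{S}_n\|^p)^{2/p}$, the observation that $r^{2-p}E\|\hat{X}\|^p\to 0$, and \eqref{Glt}. Your lower-bound skeleton (walk stays inside for $n$ of order $r^2/E\|G(\hat{X})\|^2$) is correct in spirit and can be completed with the upper half of \eqref{fest} plus Markov and L\'evy's inequality, but your upper bound needs to be replaced by an argument of Klass type.
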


\begin{proof}
Recalling the definitions of $\hat{S_n}$ and $\hat{T_r}$ in (\ref{Shat}), if we let
\begin{equation*}
\phi(n)= E\max_{1\le k\le n}\|\hat{S_k}\|^p,
\end{equation*}
then by (1.3) of \cite{k} (alternatively see Theorem 2.3.1 of
\cite{dg}) there exists a constant $\alpha_p$, depending only on $p$, such
that
\begin{equation*}\label{eg2b}
E\max_{1\le k\le \hat{T}_r}\|\hat{S_k}\|^{p} \ge
\alpha_pE{\phi}(\hat{T}_r).
\end{equation*}
Now by (\ref{fest}), there exists a constant $c_p$, depending only on $p$, such
that
\begin{equation*}
\phi(n) \ge c_p n^{p/2}(E\|G(\hat{X})\|^{2})^{p/2},
\end{equation*}
hence
\begin{equation*}
(4r)^p \ge \alpha_pc_p ET_r^{p/2} (E\|G(\hat{X})\|^2)^{p/2}.
\end{equation*}
Thus (\ref{t2ub}) follows by Jensen's inequality.

For the lower bound we let
\begin{equation*}
\phi(n)= E\max_{1\le k\le n}\|\hat{S_k}\|^2.
\end{equation*}
By (1.10) of \cite{k1} (alternatively see Theorem
2.3.1 of \cite{dg}) there exists a universal constant $\beta$ such
that
\begin{equation*}\label{eg2}
E\max_{1\le k\le \hat{T}_r}\|\hat{S_k}\|^2 \le \beta E\phi(\hat{T}_r).
\end{equation*}
By Doob's inequality, Jensen's inequality and (\ref{fest}), for some constant $C_p$ depending only on $p$,
\begin{equation*}
\phi(n)\le 4E\|\hat{S_n}\|^2 \le 4(E\|\hat{S_n}\|^p)^{2/p} \le C_p nE\|G(\hat{X})\|^{2} + C_p n^{2/p} (E\|\hat{X}\|^{p})^{2/p}.
\end{equation*}
Thus
\begin{equation}\label{last}
r^2 \le \beta C_p ET_rE\|G(\hat{X})\|^{2} + \beta C_p ET_r^{2/p} (E\|\hat{X}\|^{p})^{2/p}.
\end{equation}
If $p=2$ then $E\|G(\hat{X})\|^{2}=E\|\hat{X}\|^{2}$ by \eqref{G(X)}, and so the following stronger
(than (\ref{t2lb})) condition holds; for all $r>0$
\begin{equation}\label{t2lb1}
\frac {ET_r}{r^2}\ge \frac{c}{E\|G(\hat{X})\|^{2}}
\end{equation}
where $c=(2\beta C_p)^{-1}$.
If $p>2$ then $r^{2-p}E\|\hat{X}\|^{p}\to 0$ since $E\|X\|^2<\infty$.  In that case (\ref{t2lb}) follows  from
(\ref{last}), Jensen's inequality,
and \eqref{Glt} together with the comparability of Gaussian norms.
\end{proof}

When $p=2$ the pair of inequalities \eqref{t2ub} and \eqref{t2lb1} also follow easily from
\eqref{1t2a} and \eqref{c1t2a}.  The main interest in Proposition \ref{examp} as a source of examples is when $p>2$.

Using the bounds in  (\ref{t2ub}) and (\ref{t2lb}) we can compare
$h(r)$ and $ET_r$ for large $r$.  For convenience we will write
$h(r)ET_r\approx L$ if
\begin{equation}\label{T}
cL\le \liminf_{r\to\infty}h(r)ET_r\le
\limsup_{r\to\infty}h(r)ET_r\le CL,
\end{equation}
where the constants depend only on the Banach space.
Thus in the setting of Proposition \ref{examp}, it follows that
\begin{equation}\label{hTc1}
h(r)ET_r\approx  \frac{E\|X\|^2}{E\|G(X)\|^2}.
\end{equation}
The RHS of (\ref{hTc1}) is always bounded below by a constant depending on
the type 2 constant of $B$ by Proposition 9.24 of \cite{lt}. However there is
no corresponding upper bound (unless $p=2$).

We conclude with some specific examples.  Let
\begin{equation*}
X_{i,j}=\alpha_j r_{i,j}I(j\in\Theta_i)
\end{equation*}
where
$\{\Theta_i\}_{i\ge 1}$ is a sequence of IID random variables
taking values in the power set of the natural numbers $\mathbb{N}$,  $\{r_{i,j}\}_{i\ge 1, j\ge
1}$ is a sequence of IID  Rademacher random variables independent of
$\{\Theta_i\}_{i\ge 1}$ and $\{\alpha_j\}_{j\ge 1}$ is a
sequence of non-negative real numbers.  In this case (\ref{hTc1}) becomes
\begin{equation}\label{hT}
h(r)ET_r\approx
\frac{E\big(\sum_{j=1}^\infty \alpha_j^p
I(j\in\Theta)\big)^{2/p}}{\big(\sum_{j=1}^\infty
\alpha_j^pp_j^{p/2}\big)^{2/p}}
\end{equation}
where $p_j=P(j\in \Theta_1)$.

\begin{ex}\label{ex1} $\Theta$ is non-random;  thus $\Theta \equiv A$ for some
fixed subset $A$ of $\mathbb{N}$. \end{ex} In that case $X$ is
simply a Rademacher sum and in order that it take values in $\ell^p$ we
require $\|X\|\equiv\sum_{j\in A}\alpha_j^p<\infty.$  Thus all moments of $\|X\|$ are finite
and in particular
\begin{equation*}
E\big(\sum_{j=1}^\infty \alpha_j^p I(j\in\Theta)\big)^{2/p} =
\big(\sum_{j\in A}\alpha_j^p \big)^{2/p}.
\end{equation*}
Since $p_j\equiv 1$ on $A$ and 0 otherwise,
\begin{equation*} h(r)ET_r\approx 1.
\end{equation*}

\begin{ex} $|\Theta|=k$ for some $k\ge 1$ and $\alpha_i\equiv 1$.
\end{ex}
In that case
\begin{equation*}
\|X\|^p=\sum_{j=1}^\infty I(j\in\Theta)
=k
\end{equation*}
and so again all moments are finite, and
\begin{equation*} h(r)ET_r\approx \frac
{k^{2/p}}{\big(\sum_{j=1}^\infty p_j^{p/2}\big)^{2/p}}.
\end{equation*}
As a particular case assume $1\le k\le d <\infty$ and $\Theta$ is
uniformly distributed on subsets of $\{1, \dots ,d\}$ of size $k$.
Then $p_j= k/d$ for $1\le j\le d $ and is 0 otherwise, hence
\begin{equation*} h(r)ET_r\approx (d/k)^{1-\frac 2p }.
\end{equation*}

This class of examples should be contrasted with Example \ref{ex1} when $A$
is taken to be a subset of $\{1, \dots ,d\}$ of size $k$. If the
$k$ directions in which $S_n$ can move are fixed, then $h$ can be
used to approximate $ET_r$, but if at each step the $k$ directions
are chosen at random from $\{1, \dots ,d\}$ then this is no longer
the case (if $p>2$).  These examples also illustrate that the RHS of (\ref{hTc1}) can not be bounded above
by a constant independent of the distribution of $X$ when $p>2$.

\vskip .2in

\noindent{\bf Acknowledgement.} The author is grateful to Terry
McConnell for several stimulating conversations, to Jim Kuelbs for
providing key references and to Michel Ledoux for insights regarding
Proposition \ref{Astype}.  Thanks also to an anonymous referee
for several useful suggestions.

\end{document}